\newtheorem{theorem}{Theorem}[section]
\newtheorem{remark}[theorem]{Remark}
\newtheorem{question}[theorem]{Question}
\title{This is the title}
\begin{document}
	\hrule\hrule\hrule\hrule\hrule
	\vspace{0.3cm}	
	\begin{center}
		{\bf{ALGEBRAIC AND\^{O} DILATION}}\\
		\vspace{0.3cm}
		\hrule\hrule\hrule\hrule\hrule
		\vspace{0.3cm}
		\textbf{K. MAHESH KRISHNA}\\
		Post Doctoral Fellow \\
		Statistics and Mathematics Unit\\
		Indian Statistical Institute, Bangalore Centre\\
		Karnataka 560 059, India\\
		Email: kmaheshak@gmail.com\\
		
		Date: \today
	\end{center}

	\hrule
	\vspace{0.5cm}
	\textbf{Abstract}: We solve the  And\^{o} dilation problem for linear maps on vector space asked by Krishna and Johnson in \textit{[Oper. Matrices, 2022]}. More precisely,  we show that any commuting linear maps on vector space  can be dilated to commuting injective linear maps.

	\textbf{Keywords}:  Dilation, And\^{o} dilation, vector space, linear map.
	
	\textbf{Mathematics Subject Classification (2020)}: 47A20, 15A03, 15A04.
	\vspace{0.5cm}
	\hrule 

\section{Introduction}
After a decade of work of Sz.-Nagy \cite{NAGYFOIAS, NAGY}, And\^{o} \cite{ANDO} made a breakthrough result in the dilation theory of contractions on Hilbert space which states as follows.
\begin{theorem}\cite{ANDO}\label{ANDOTHEOREM} (\textbf{And\^{o} Dilation})
	Let $\mathcal{H}$ be a Hilbert space and $T,S:\mathcal{H}\to \mathcal{H}$ be commuting contractions.	Then there exists a Hilbert space $\mathcal{K}$	which contains $\mathcal{H}$ isometrically and a pair of commuting unitaries   $U,V:\mathcal{K}\to \mathcal{K}$ such that 
	\begin{align*}
		T^nS^m=P_\mathcal{H}U^n{V}^m|_\mathcal{H}, \quad \forall n,m \in \mathbb{Z}_+\coloneqq\mathbb{N}\cup\{0\}, 
	\end{align*}
	where $P_\mathcal{H}:\mathcal{K}\to \mathcal{H}$  is the orthogonal projection onto $\mathcal{H}$.
\end{theorem}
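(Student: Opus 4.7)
My plan is a two-step reduction: first produce commuting \emph{isometric} dilations $V_T, V_S : \mathcal{K}_1 \to \mathcal{K}_1$ of $T$ and $S$ on a Hilbert space $\mathcal{K}_1 \supseteq \mathcal{H}$, and then dilate these isometries to commuting unitaries on an even larger space $\mathcal{K} \supseteq \mathcal{K}_1$. The second step is classical: any pair of commuting isometries on a Hilbert space can be simultaneously extended to commuting unitaries via an inductive-limit (minimal unitary dilation) construction, and the extension preserves the dilation identity for nonnegative powers. The real substance therefore lies in step one.

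For step one I would start from the Sz.-Nagy isometric dilation of a single contraction $T$, built on $\mathcal{H} \oplus \ell^2(\mathbb{Z}_+, \mathcal{D}_T)$ with $\mathcal{D}_T = \overline{\operatorname{ran}(I - T^*T)^{1/2}}$ and defect operator $D_T = (I - T^*T)^{1/2}$. The key challenge is to carry out an analogous isometric dilation of $S$ on the \emph{same} enlarged space in such a way that the two resulting isometries commute. Following Andô's original strategy, I would enlarge the auxiliary tail space so that the defect spaces $\mathcal{D}_T, \mathcal{D}_S$ and their adjoint analogues $\mathcal{D}_{T^*}, \mathcal{D}_{S^*}$ all embed naturally, and then define $V_T, V_S$ by block operator matrices whose interface with $\mathcal{H}$ is dictated by $T, S, D_T, D_S$ and whose remaining entries are arranged via a carefully chosen unitary identifying $\mathcal{D}_T \oplus \mathcal{D}_{S^*}$ with $\mathcal{D}_{T^*} \oplus \mathcal{D}_S$.

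The main obstacle is showing that this intertwining unitary exists and yields commuting $V_T, V_S$. Its existence reduces to an equality between two naturally defined subspaces of $\mathcal{H} \oplus \mathcal{H}$, which follows from the hypothesis $TS = ST$ together with the standard defect identities $T D_T = D_{T^*} T$ and $S D_S = D_{S^*} S$. Once the unitary has been fixed, commutativity $V_T V_S = V_S V_T$ is verified by a block-matrix computation using those intertwining relations. The dilation identity $T^n S^m = P_\mathcal{H} V_T^n V_S^m|_\mathcal{H}$ then follows because $\mathcal{H}$ is co-invariant under both $V_T^*$ and $V_S^*$ by construction, so the $\mathcal{H}$-component of $V_T^n V_S^m h$ for $h \in \mathcal{H}$ is exactly $T^n S^m h$. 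Finally, applying the standard minimal unitary dilation construction to the commuting isometries $V_T, V_S$ produces the commuting unitaries $U, V$ on $\mathcal{K}$ required by the theorem.
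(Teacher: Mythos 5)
First, a point of comparison: the paper does not actually prove this statement. Theorem \ref{ANDOTHEOREM} is quoted from And\^{o}'s 1963 paper as motivation, and the only proof in the paper is of the algebraic analogue, Theorem \ref{ALGEBRAICANDOTHEOREM}, whose argument is explicitly modeled on And\^{o}'s original one. Your two-step plan --- first commuting \emph{isometric} dilations, then simultaneous extension of commuting isometries to commuting unitaries (It\^{o}'s theorem) --- is exactly that classical route, so at the level of strategy you are on the standard track.

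However, your description of the crucial step has a genuine gap. The intertwining unitary in And\^{o}'s construction is not a map from $\mathcal{D}_T\oplus\mathcal{D}_{S^*}$ to $\mathcal{D}_{T^*}\oplus\mathcal{D}_S$, and its existence does not come from the adjoint defect identities $TD_T=D_{T^*}T$; the adjoint defect spaces play no role in the isometric-dilation stage (they belong to the one-step Halmos--Sch\"affer unitary block matrix). What one actually needs is a unitary $v$ of an auxiliary block $\mathcal{H}\oplus\mathcal{H}\oplus\mathcal{H}\oplus\mathcal{H}$, obtained by padding the Sz.-Nagy tail with extra zero coordinates and regrouping, which sends $(D_TSx,0,D_Sx,0)$ to $(D_STx,0,D_Tx,0)$. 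This assignment is well defined and isometric because of
\begin{align*}
\|D_TSx\|^2+\|D_Sx\|^2=\|x\|^2-\|TSx\|^2=\|x\|^2-\|STx\|^2=\|D_STx\|^2+\|D_Tx\|^2,
\end{align*}
which uses both commutativity and contractivity; it is emphatically not an ``equality of two subspaces of $\mathcal{H}\oplus\mathcal{H}$'' (take $S=I$: the first set is $\{(D_Tx,0)\}$ and the second is $\{(0,D_Tx)\}$), only a surjective isometry between their closures. The remaining issue, which your sketch omits entirely, is extending this partial isometry to a unitary of the whole auxiliary block: this requires the orthogonal complements of the two closures to have equal Hilbert dimension, and that is precisely why the extra zero slots are inserted --- they guarantee each complement contains a copy of $\mathcal{H}$, so the dimensions match in the infinite-dimensional case, while the finite-dimensional case follows by counting. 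The paper's proof of Theorem \ref{ALGEBRAICANDOTHEOREM} reproduces exactly this point in its two cases. With $v$ in hand one sets $V_T=WW_1$ and $V_S=W_2W^{-1}$, where $W$ applies $v$ blockwise on the tail; commutativity is then a direct computation, and the final passage to commuting unitaries via It\^{o}'s extension theorem is, as you say, classical.
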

After the work of And\^{o}, Parrott  \cite{PARROTT} showed that it is not possible  to improve Theorem \ref{ANDOTHEOREM} for more than two commuting contractions. Later, And\^{o} dilation is derived for commuting contractions on Banach spaces \cite{STROESCU}.  In 2021 (arXiv version), in the paper \cite{KRISHNAJOHNSON}, while continuing the work of  Bhat, De and Rakshit \cite{BHATDERAKSHIT} on dilations of linear maps on vector spaces, Krishna and Johnson \cite{KRISHNAJOHNSON} asked following problem. 
\begin{question}\cite{KRISHNAJOHNSON}\label{KJQUESTION}
\textbf{Whether there is an And\^{o} dilation for linear maps on vector spaces? More precisely, whether commuting linear maps on vector space  can be dilated to commuting bijective linear maps?}
\end{question}
In this paper, we solve Question \ref{KJQUESTION} partially by showing that we can go upto injective linear maps.

\section{Algebraic And\^{o}  Dilation}
 	We first give a different proof Theorem  \ref{BHATTHEOREM}  than given in \cite{BHATDERAKSHIT} which helps us to give a proof of algebraic version of And\^{o} dilation. 
\begin{theorem}\label{BHATTHEOREM}\cite{BHATDERAKSHIT}
(Algebraic Sz.-Nagy Dilation or Bhat-De-Rakshit Dilation)	Let $\mathcal{V}$ be a vector space  and 	$T: \mathcal{V} \to \mathcal{V}$ be a linear map. Then there is a vector space $\mathcal{W}$ containing $\mathcal{V}$ through a natural coordinate injective map and an injective linear map $U: \mathcal{W} \to \mathcal{W}$  such that 
		\begin{align*}
		\text{(Dilation equation)} \quad T^n=P_\mathcal{V}U^n|_\mathcal{V}, \quad \forall n\in \mathbb{Z}_+, 
	\end{align*}
	where $P_\mathcal{V}: \mathcal{W} \to \mathcal{V}$ is a coordinate projection  (idempotent)  onto $\mathcal{V}$.
\end{theorem}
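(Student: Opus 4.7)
The plan is to imitate the classical Sz.-Nagy/Sch\"affer unitary dilation, but to replace the defect operator $D_T=(I-T^{*}T)^{1/2}$---which has no meaning on a bare vector space---by the identity. Because we only demand that $U$ be injective and not an isometry, this simplification goes through cleanly, and the resulting construction is sufficiently concrete to be reused in the two-variable And\^o argument.

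Concretely, I would set
\[
\mathcal{W}\coloneqq\bigoplus_{n=0}^{\infty}\mathcal{V}_n,\qquad \mathcal{V}_n\coloneqq\mathcal{V},
\]
i.e.\ the space of eventually zero sequences $(v_0,v_1,v_2,\dots)$ with $v_n\in\mathcal{V}$. The natural coordinate inclusion $\mathcal{V}\hookrightarrow\mathcal{W}$ sends $v\mapsto(v,0,0,\dots)$, identifying $\mathcal{V}$ with the zeroth slot, and the coordinate projection $P_\mathcal{V}:\mathcal{W}\to\mathcal{V}$ reads off this zeroth slot. The dilation is defined by the shift-with-twist
\[
U(v_0,v_1,v_2,\dots)\coloneqq(Tv_0,\,v_0,\,v_1,\,v_2,\dots),
\]
which evidently preserves the eventually-zero condition and is therefore a well-defined linear endomorphism of $\mathcal{W}$.

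Two verifications remain. First, injectivity of $U$: if $U(v_0,v_1,v_2,\dots)=0$, reading the first, second, third,\ldots\ coordinates forces $v_0=v_1=v_2=\cdots=0$, and then $Tv_0=0$ is automatic. Second, the dilation equation $T^n=P_\mathcal{V}U^n|_\mathcal{V}$ follows by a one-line induction from
\[
U^n(v,0,0,\dots)=(T^nv,\,T^{n-1}v,\dots,Tv,\,v,\,0,0,\dots),
\]
since applying $P_\mathcal{V}$ returns the zeroth coordinate $T^nv$.

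There is no serious obstacle; the only point requiring care is that $U$ sends the direct sum (eventually zero sequences) to itself, which is immediate from the formula. The reason to redo the proof rather than cite \cite{BHATDERAKSHIT} is strategic: this explicit shift presentation of $U$ on $\bigoplus_{n\ge 0}\mathcal{V}$ is exactly the scaffold onto which a second commuting injective map $V$ can be grafted in the And\^o case, so fixing this particular model now will make the two-variable argument in the next step essentially a bookkeeping exercise.
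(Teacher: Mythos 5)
Your proposal is correct and follows essentially the same route as the paper: both take $\mathcal{W}=\oplus_{n=0}^{\infty}\mathcal{V}$ with the zeroth-coordinate inclusion and projection, and define $U$ as a Sch\"affer-type shift acting by $T$ on the zeroth slot. The only difference is what is recorded in the second slot --- the paper writes $(I_\mathcal{V}-T)x_0$ there (so injectivity uses $x_0=Tx_0+(I_\mathcal{V}-T)x_0$), while you write $x_0$ itself --- and both choices work equally well.
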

\begin{proof}
Our construction is motivated from the construction of Sz.-Nagy dilation of a contraction on a Hilbert space given  in Chapter 1 of \cite{NAGYFOIAS}.  Given a vector space $\mathcal{V}$, let $I_\mathcal{V}$ be the identity operator on $\mathcal{V}$ and 
$\oplus_{n=0}^{\infty} \mathcal{V}$ be the vector space defined by 
\begin{align*}
	\oplus_{n=0}^{\infty} \mathcal{V}\coloneqq \{ (x_n)_{n=0}^\infty, x_n \in \mathcal{V}, \forall n \in \mathbb{Z}_+, x_n\neq 0 
	\text{ only for finitely many } n's\}.
\end{align*}	
Let $T: \mathcal{V} \to \mathcal{V}$ be a linear map. Define $\mathcal{W}\coloneqq \oplus_{n=0}^{\infty} \mathcal{V}$ and 
\begin{align*}
	&	I:\mathcal{V} \ni x \mapsto (x, 0, \dots ) \in  \mathcal{W},\\
	&	U: \mathcal{W} \ni (x_n)_{n=0}^\infty \mapsto (Tx_0, (I_\mathcal{V}-T)x_0, x_1, x_2 , \dots) \in \mathcal{W},\\
	&	P:\mathcal{W} \ni (x_n)_{n=0}^\infty \mapsto x_0\in \mathcal{V}.
\end{align*}
Then clearly the dilation equation is satisfied. The proof is complete if we show that $U$ is injective. Let  $(x_n)_{n=0}^\infty\in  \mathcal{W}$ be  such that $U(x_n)_{n=0}^\infty=0$. then 
\begin{align*}
	(Tx_0, (I_\mathcal{V}-T)x_0, x_1, x_2 , \dots)=(0,0,0, \dots).
\end{align*}
We then have $x_1=x_2=\cdots=0$ and $Tx_0=(I_\mathcal{V}-T)x_0=0$. Rewriting
\begin{align*}
	x_0=Tx_0=0.
\end{align*}
Thus $(\mathcal{W}, U)$ is an injective linear dilation of $T$.
\end{proof}
Following is the most important result of this paper which we call algebraic  And\^{o} dilation.
\begin{theorem}\label{ALGEBRAICANDOTHEOREM}(\textbf{Algebraic And\^{o} Dilation})
Let $\mathcal{V}$ be a vector space and $T,S: \mathcal{V} \to \mathcal{V}$  be commuting linear maps. Then there is a vector space $\mathcal{W}$ containing $\mathcal{V}$ through a natural coordinate injective map and  injective linear maps $U, V: \mathcal{W} \to \mathcal{W}$ such that 	
\begin{align*}
\text{(Bivariate Dilation equation)} \quad 	T^nS^m=P_\mathcal{V}U^n{V}^m|_\mathcal{V}, \quad \forall n,m \in \mathbb{Z}_+\coloneqq\mathbb{N}\cup\{0\}, 
\end{align*}
	where $P_\mathcal{V}: \mathcal{W} \to \mathcal{V}$ is a coordinate projection  (idempotent)  onto $\mathcal{V}$.
\end{theorem}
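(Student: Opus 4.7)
The plan is to apply Theorem~\ref{BHATTHEOREM} twice in cascade, once in each variable, each time piggybacking the other operator as a coordinatewise lift. The key structural observation is that if $T$ and $S$ commute on $\mathcal{V}$, then the coordinatewise action of $S$ on $\oplus_{n=0}^\infty \mathcal{V}$ automatically commutes with the shift-with-defect dilation of $T$ built in the proof of Theorem~\ref{BHATTHEOREM}, and an analogous commutation holds one level up.

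In the first stage I take $\mathcal{W}_1 \coloneqq \oplus_{n=0}^\infty \mathcal{V}$ with coordinate embedding $I_1 : \mathcal{V} \to \mathcal{W}_1$, projection $P_1 : \mathcal{W}_1 \to \mathcal{V}$, and the injective dilation $U_1$ of $T$ provided by Theorem~\ref{BHATTHEOREM}. I then lift $S$ to $\mathcal{W}_1$ coordinatewise by $V_1((x_n)_{n \geq 0}) \coloneqq (S x_n)_{n \geq 0}$; a short direct computation using only $TS=ST$ gives $U_1 V_1 = V_1 U_1$. In the second stage I apply Theorem~\ref{BHATTHEOREM} to $V_1$ on $\mathcal{W}_1$, obtaining $\mathcal{W}_2 \coloneqq \oplus_{m=0}^\infty \mathcal{W}_1$ with embedding $I_2$, projection $P_2$, and injective dilation $V_2$ of $V_1$, and I lift $U_1$ to $\mathcal{W}_2$ coordinatewise by $U_2((y_m)_{m \geq 0}) \coloneqq (U_1 y_m)_{m \geq 0}$. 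Since $V_2$ mixes only the first two coordinates through $V_1$ and $I_{\mathcal{W}_1} - V_1$, the verification of $U_2 V_2 = V_2 U_2$ reduces to the already-established $U_1 V_1 = V_1 U_1$. Injectivity of $U_2$ is immediate from that of $U_1$, and injectivity of $V_2$ is provided by Theorem~\ref{BHATTHEOREM}.

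To finish I set $\mathcal{W} \coloneqq \mathcal{W}_2$, $U \coloneqq U_2$, $V \coloneqq V_2$, $P_\mathcal{V} \coloneqq P_1 \circ P_2$, and embed $\mathcal{V}$ in $\mathcal{W}$ via $I_2 \circ I_1$; this is a natural coordinate injection onto the $(0,0)$ position of $\oplus_{(n,m) \in \mathbb{Z}_+^2} \mathcal{V}$, and $P_\mathcal{V}$ is the matching coordinate projection. Because $U_2^n$ acts coordinatewise as $U_1^n$, the identity $P_2 U_2^n = U_1^n P_2$ is immediate, and then for $x \in \mathcal{V}$
\[
P_\mathcal{V} U^n V^m (I_2 I_1 x) \;=\; P_1 U_1^n P_2 V_2^m I_2 I_1 x \;=\; P_1 U_1^n V_1^m I_1 x \;=\; P_1 U_1^n I_1 S^m x \;=\; T^n S^m x,
\]
where the last two equalities are the single-variable dilation equations for $V_1$ at the $\mathcal{W}_1$ level and for $U_1$ at the $\mathcal{V}$ level.

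The main obstacle, and the reason a single application of Theorem~\ref{BHATTHEOREM} does not suffice, is that the naive coordinatewise lift $(x_n) \mapsto (S x_n)$ on $\oplus_n \mathcal{V}$ commutes with the dilation of $T$ but is not injective whenever $S$ is not, while symmetrizing the construction in a single stage instead destroys commutativity. The two-stage cascade sidesteps both failures: each of $U$ and $V$ is realized as the shift-with-defect operator at its own level of the construction, while the coordinatewise lift in the orthogonal direction automatically inherits injectivity from the operator being lifted.
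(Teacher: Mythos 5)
Your proof is correct, but it takes a genuinely different route from the paper's. The paper follows And\^{o}'s original Hilbert-space argument: it introduces two shift-with-defect operators $W_1,W_2$ (each with an extra zero slot), shows that the assignment $((I_\mathcal{V}-T)Sx,0,(I_\mathcal{V}-S)x,0)\mapsto((I_\mathcal{V}-S)Tx,0,(I_\mathcal{V}-T)x,0)$ is a well-defined linear isomorphism between two subspaces of $\mathcal{V}\oplus\mathcal{V}\oplus\mathcal{V}\oplus\mathcal{V}$, extends it to a bijection $v$ of the whole fourfold sum by a dimension count (treating the finite- and infinite-dimensional cases separately and choosing complements), assembles a block-diagonal bijection $W$ of $\mathcal{W}$ from copies of $v$, and sets $U=WW_1$, $V=W_2W^{-1}$. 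Your cascade replaces all of this with one algebraic observation: the defect $I_\mathcal{V}-T$ of the dilation in Theorem~\ref{BHATTHEOREM} commutes with $S$ whenever $T$ does (in contrast to the Hilbert-space defect $(I-T^*T)^{1/2}$, which is precisely why And\^{o}'s theorem is hard and why the same cascade fails for contractions), so the coordinatewise lift $V_1$ of $S$ commutes with $U_1$, and a second application of Theorem~\ref{BHATTHEOREM} to $V_1$ cures its only deficiency --- possible non-injectivity --- without disturbing commutativity, since $V_2$ is built from $V_1$ and $I_{\mathcal{W}_1}-V_1$, both of which commute with the coordinatewise lift of $U_1$. All your individual steps check out: $U_1V_1=V_1U_1$ from $TS=ST$, $U_2V_2=V_2U_2$ from the previous identity, injectivity of $U_2$ and $V_2$, and the telescoping $P_1U_1^nP_2V_2^mI_2I_1=P_1U_1^nV_1^mI_1=P_1U_1^nI_1S^m=T^nS^m$. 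What your route buys: it is shorter, needs no choice of complements or cardinal arithmetic, and iterates by induction to any finite family of commuting linear maps, which would settle item (iv) of the paper's closing question --- a notable divergence from the Hilbert-space situation, where Parrott's example blocks three commuting contractions. What it gives up: the construction is asymmetric in $T$ and $S$, whereas the paper's $U$ and $V$ are produced symmetrically via the genuine bijection $W$, which may be the more promising template for upgrading the dilation to bijective maps (item (v)).
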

\begin{proof}
Our arguments are motivated from original argument for And\^{o} dilation for commuting contractions on Hilbert spaces by  And\^{o} \cite{ANDO}. Define 
$\mathcal{W}\coloneqq \oplus_{n=0}^{\infty} \mathcal{V}$ and 
\begin{align*}
	&	W_1: \mathcal{W} \ni (x_n)_{n=0}^\infty \mapsto (Tx_0, (I_\mathcal{V}-T)x_0, 0, x_1, x_2 , \dots) \in \mathcal{W},\\
	&	W_2: \mathcal{W} \ni (x_n)_{n=0}^\infty \mapsto (Sx_0, (I_\mathcal{V}-S)x_0, 0, x_1, x_2 , \dots) \in \mathcal{W},\\
	&	P:\mathcal{W} \ni (x_n)_{n=0}^\infty \mapsto  x_0\in \mathcal{V}.
\end{align*}
Let $x \in \mathcal{V}$ be such that $(I_\mathcal{V}-T)Sx=0=(I_\mathcal{V}-S)x$. Then 
\begin{align*}
	(I_\mathcal{V}-S)Tx=Tx-STx=Tx-TSx=T(I_\mathcal{V}-S)x=T0=0
\end{align*}
and 
\begin{align*}
(I_\mathcal{V}-T)x	=x-Tx=x-T(Sx)=Sx-TSx=(I_\mathcal{V}-T)Sx=0.
\end{align*}
This obervation says that the map 
\begin{align*}
	v:\{((I_\mathcal{V}-T)Sx, 0,(I_\mathcal{V}-S)x,0): x \in \mathcal{V}\} \to \{((I_\mathcal{V}-S)Tx, 0,(I_\mathcal{V}-T)x,0): x \in \mathcal{V}\}
\end{align*}
defined by 
\begin{align*}
v(I_\mathcal{V}-T)Sx, 0,(I_\mathcal{V}-S)x,0)\coloneqq ((I_\mathcal{V}-S)Tx, 0,(I_\mathcal{V}-T)x,0)
\end{align*}
is a well-defined injective linear map. Clearly $v$ is surjective. We now claim that $v$ can be extended as a bijective linear map (which we again denote by $v$) from $\mathcal{V} \oplus  \mathcal{V}\oplus  \mathcal{V} \oplus  \mathcal{V}  $ to $\mathcal{V} \oplus  \mathcal{V}\oplus  \mathcal{V} \oplus  \mathcal{V}  $. We get two cases. \\
Case (i): $\operatorname{dim}(\mathcal{V})<\infty$.\\
Let $\mathcal{Y}$ be any vector space complement of $\{((I_\mathcal{V}-T)Sx, 0,(I_\mathcal{V}-S)x,0): x \in \mathcal{V}\}$ in $\mathcal{V} \oplus  \mathcal{V}\oplus  \mathcal{V} \oplus  \mathcal{V} $ and $\mathcal{Z}$ be any vector space complement of $\{((I_\mathcal{V}-S)Tx, 0,(I_\mathcal{V}-T)x,0): x \in \mathcal{V}\}$ in $\mathcal{V} \oplus  \mathcal{V}\oplus  \mathcal{V} \oplus  \mathcal{V}$. From the dimension formula for vector spaces, we then get
\begin{align*}
	\operatorname{dim}(\mathcal{V} \oplus  \mathcal{V}\oplus  \mathcal{V} \oplus  \mathcal{V})&=	\operatorname{dim}(\{((I_\mathcal{V}-T)Sx, 0,(I_\mathcal{V}-S)x,0): x \in \mathcal{V}\})+	\operatorname{dim}(\mathcal{Y})\\
	&=	\operatorname{dim}(\{((I_\mathcal{V}-S)Tx, 0,(I_\mathcal{V}-T)x,0): x \in \mathcal{V}\})+\operatorname{dim}(\mathcal{Z})
\end{align*}
Since $\operatorname{dim}(\{((I_\mathcal{V}-T)Sx, 0,(I_\mathcal{V}-S)x,0): x \in \mathcal{V}\})=\operatorname{dim}(\{((I_\mathcal{V}-S)Tx, 0,(I_\mathcal{V}-T)x,0): x \in \mathcal{V}\})$, 
\begin{align*}
	\operatorname{dim}(\mathcal{Y})=\operatorname{dim}(\mathcal{Z}).
\end{align*}
Thus $v$ can be extended bijectively and linearly from  $\mathcal{V} \oplus  \mathcal{V}\oplus  \mathcal{V} \oplus  \mathcal{V}$ to $\mathcal{V} \oplus  \mathcal{V}\oplus  \mathcal{V} \oplus  \mathcal{V}$.\\
Case (i): $\operatorname{dim}(\mathcal{V})=\infty$.\\
Let $\mathcal{Y}$ be any vector space complement of $\{((I_\mathcal{V}-T)Sx, 0,(I_\mathcal{V}-S)x,0): x \in \mathcal{V}\}$ containing the space $\{(0,x,0,0): x\in \mathcal{V}\}$ in $\mathcal{V} \oplus  \mathcal{V}\oplus  \mathcal{V} \oplus  \mathcal{V} $ and $\mathcal{Z}$ be any vector space complement of $\{((I_\mathcal{V}-S)Tx, 0,(I_\mathcal{V}-T)x,0): x \in \mathcal{V}\}$ containing the space $\{(0,x,0,0): x\in \mathcal{V}\}$ in $\mathcal{V} \oplus  \mathcal{V}\oplus  \mathcal{V} \oplus  \mathcal{V} $. Then 
\begin{align*}
\operatorname{dim}(\mathcal{V})	=\operatorname{dim}(\mathcal{V} \oplus  \mathcal{V}\oplus  \mathcal{V} \oplus  \mathcal{V})\geq 	\operatorname{dim}(\mathcal{Y})\geq \operatorname{dim}(\mathcal{V})
\end{align*}
and 
\begin{align*}
	\operatorname{dim}(\mathcal{V})	=\operatorname{dim}(\mathcal{V} \oplus  \mathcal{V}\oplus  \mathcal{V} \oplus  \mathcal{V})\geq 	\operatorname{dim}(\mathcal{Z})\geq \operatorname{dim}(\mathcal{V}).
\end{align*}
Therefore $\operatorname{dim}(\mathcal{Y})=\operatorname{dim}(\mathcal{Z})$ and hence  $v$ can be extended bijectively and linearly from  $\mathcal{V} \oplus  \mathcal{V}\oplus  \mathcal{V} \oplus  \mathcal{V}$ to $\mathcal{V} \oplus  \mathcal{V}\oplus  \mathcal{V} \oplus  \mathcal{V}$.\\
Define $\mathcal{V}^{(4)}\coloneqq \mathcal{V} \oplus  \mathcal{V}\oplus  \mathcal{V} \oplus  \mathcal{V}$. We identify $\mathcal{W}$ and $\mathcal{V} \oplus (\oplus_{n=1}^{\infty} \mathcal{V}^{(4)})$ by the map 
\begin{align*}
	(x_n)_{n=0}^\infty \mapsto (x_0, (x_1, x_2,x_3,x_4), (x_5, x_6, x_7, x_8), \dots)
\end{align*}
Now we define $W:\mathcal{W} \to \mathcal{W}$ by 
\begin{align*}
W(x_n)_{n=0}^\infty \coloneqq (x_0, v(x_1, x_2,x_3,x_4), v(x_5, x_6, x_7, x_8), \dots)
\end{align*}
which becomes bijective linear map with inverse 
\begin{align*}
	W^{-1}(x_n)_{n=0}^\infty \coloneqq (x_0, v^{-1}(x_1, x_2,x_3,x_4), v^{-1}(x_5, x_6, x_7, x_8), \dots)
\end{align*}
We finally define $U\coloneqq WW_1$, $V\coloneqq W_2W^{-1}$  and show that $(\mathcal{W},  (U, V))$ is the required  injective linear dilation of $(T,S)$. Clearly $U$ and $V$ are injective. By induction, we also have the multivariate dilation equation
\begin{align*}
	T^nS^mx=P_\mathcal{V}U^nV^mx, \quad \forall n, m \in \mathbb{Z}_+,  \forall x \in  \mathcal{V}.
\end{align*}
Now we are left only with proving that $ U$ and $V$ commute. Let $(x_n)_{n=0}^\infty \in \mathcal{W}$. Then 
\begin{align*}
	UV(x_n)_{n=0}^\infty&=WW_1W_2W^{-1}(x_n)_{n=0}^\infty\\
&	=WW_1W_2 (x_0, v^{-1}(x_1, x_2,x_3,x_4), v^{-1}(x_5, x_6, x_7, x_8), \dots)\\
&=WW_1(Sx_0, (I_\mathcal{V}-S)x_0, 0,  v^{-1}(x_1, x_2,x_3,x_4), v^{-1}(x_5, x_6, x_7, x_8), \dots) \\
&=W(TSx_0, (I_\mathcal{V}-T)Sx_0, 0, (I_\mathcal{V}-S)x_0, 0,  v^{-1}(x_1, x_2,x_3,x_4), v^{-1}(x_5, x_6, x_7, x_8), \dots) \\
&=(TSx_0, v((I_\mathcal{V}-T)Sx_0, 0, (I_\mathcal{V}-S)x_0, 0), (x_1, x_2,x_3,x_4), (x_5, x_6, x_7, x_8), \dots)\\
&=(STx_0, (I_\mathcal{V}-S)Tx_0, 0, (I_\mathcal{V}-T)x_0, 0), (x_1, x_2,x_3,x_4), (x_5, x_6, x_7, x_8), \dots)\\
\end{align*}
and 
\begin{align*}
	VU(x_n)_{n=0}^\infty&=W_2W^{-1}WW_1(x_n)_{n=0}^\infty=W_2W_1(x_n)_{n=0}^\infty\\	
&=W_2(Tx_0, (I_\mathcal{V}-T)x_0, 0, x_1, x_2 , \dots) \\
&=(STx_0, (I_\mathcal{V}-S)Tx_0, 0, (I_\mathcal{V}-T)x_0, 0), x_1, x_2,x_3,x_4, x_5, x_6, x_7, x_8, \dots).
\end{align*}
Therefore $VU=UV$.
\end{proof}
Theorem \ref{ALGEBRAICANDOTHEOREM} and the works presented in  \cite{EGERVARY, LEVYSHALIT, HALMOS, JOHNSHALIT, SCHAFFER} gives the following problem.
\begin{question}
	\begin{enumerate}[\upshape(i)]
		\item Whether there is an explicit (matrix) construction of algebraic And\^{o} dilation?
		\item Whether there is a Halmos dilation for commuting linear maps on vector spaces?
		\item Whether there is an Egerv\'{a}ry N-dilation for commuting linear maps on vector spaces?
		\item Does Theorem \ref{ALGEBRAICANDOTHEOREM}   holds for more than two commuting linear maps?
		\item Can the dilated  injective linear maps $U,V$ in Theorem \ref{ALGEBRAICANDOTHEOREM}  be improved to bijective linear maps?
	\end{enumerate}
\end{question}
\begin{remark}
	And\^{o} dilation for p-adic magic contractions and self-adjoint morphisms on indefinite inner product modules  over *-rings of characteristic 2 are still open \cite{KRISHNA2, KRISHNA3}.
\end{remark}
\section{Conclusions}
\begin{enumerate}
	\item In 1950, Halmos showed that every contraction on a Hilbert space can be lifted to a unitary \cite{HALMOS}.
	\item In 1953, Sz.-Nagy derived his dilation theorem \cite{NAGY}.
	\item In 1955, Schaffer gave simple proof of Sz.-Nagy dilation result \cite{SCHAFFER}.
	\item In 1963, And\^{o}  showed that Sz.-Nagy dilation holds for two commuting contractions \cite{ANDO}.
	\item In 1973, Stroescu derived And\^{o} dilation for  contractions on Banach spaces \cite{STROESCU}.
	\item In 2021, Bhat, De and Rakshit introduced set theoretic and vector space approach to dilation theory \cite{BHATDERAKSHIT}. Later, Krishna and Johnson continued this study in 2022 \cite{KRISHNAJOHNSON}.
	\item In this paper, we derived And\^{o}  dilation  for linear maps on vector spaces.
\end{enumerate}

 \bibliographystyle{plain}
 \bibliography{reference.bib}

\end{document}